 \numberwithin{equation}{section}
\newtheorem{thm}{Theorem}[section]
\newtheorem{lem}[thm]{Lemma}
\newtheorem{prop}[thm]{Proposition}
\newtheorem{defn}[thm]{Definition}
\theoremstyle{definition}
\journal{Journal}
\begin{document}

\begin{frontmatter}



\title{On the uniqueness for the 2D MHD equations without magnetic diffusion}


\author{Renhui Wan}
\ead{rhwanmath@zju.edu.cn, rhwanmath@163.com}
\address{Department of Mathematics, Zhejiang University, Hangzhou 310027, China}
\begin{abstract}
 In this paper, we obtain the uniqueness of the 2D MHD equations, which fills the gap of recent work \cite{1} by Chemin et al.
\end{abstract}

\begin{keyword}
MHD equations,  Uniqueness, Magnetic diffusion
\end{keyword}

\end{frontmatter}

\section{Introduction}
This paper considers the 2D MHD equations given by
\begin{equation} \label{1.1}
\left\{
\begin{array}{l}
\partial_t u + u\cdot\nabla u + \nabla p-\nu\Delta u = B\cdot\nabla B,  \\
\partial_t B + u\cdot\nabla B -B\cdot\nabla u=0,\\
{\rm div} u=0,\quad {\rm div}B=0, \\
u(x,0) =u_0(x), \quad B(x,0) =B_0(x),
\end{array}
\right.
\end{equation}
here $t\ge0$, $x\in \mathbb{R}^2$,  $u=u(x,t)$ and $B=B(x,t)$ are vector fields representing
the velocity and the magnetic field, respectively, $p=p(x,t)$ denotes the pressure and $\nu$ is a positive viscosity constant.
\vskip .1in
(\ref{1.1}) has been investigated by many mathematicians. In 2014,  by establishing a generalized Kato-Ponce estimate (see \cite{Kato} for the well-known result):
\begin{equation*}
<u\cdot\nabla B\mid B>_{\dot{H}^s}\le C\|\nabla u\|_{H^s}\|B\|_{H^s}^2,\ \ s>\frac{d}{2},\ d=2,3,
\end{equation*}
Fefferman et al. \cite{Fef2} obtained the local existence and uniqueness for (\ref{1.1}) and related models with the initial data $(u_{0},B_{0})\in H^s(\mathbb{R}^d),\ s>\frac{d}{2}.$ For other results concerning regularity criterions, we refer to \cite{Fan1} and \cite{Zhou1}.

Very recently, Chemin et al. in \cite{1} obtain the local existence for (\ref{1.1}) in 2D and 3D. But for the 2D case, the uniqueness was not obtained. Our main result is filling the gap of their works. The details can be described as follows
\begin{thm}\label{t1.1}
For $u_{0}\in B_{2,1}^0(\mathbb{R}^2)$ and $B_{0}\in B_{2,1}^1(\mathbb{R}^2)$ with ${\rm div}u_{0}={\rm div}B_{0}=0$, there exists a time $T=T(\nu,\|u_{0}\|_{B_{2,1}^0},\|B\|_{B_{2,1}^1})>0$ such that the system (\ref{1.1}) has a unique solution $(u,B)$ with
$$u\in C([0,T];B_{2,1}^0(\mathbb{R}^2))\cap L^1([0,T];B_{2,1}^2)$$
and
$$B\in C([0,T];B_{2,1}^1(\mathbb{R}^2)).$$
\end{thm}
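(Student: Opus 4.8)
The plan is to establish existence and uniqueness separately, following the standard Littlewood--Paley / paradifferential framework for incompressible systems. Since Chemin et al.\ in \cite{1} already provide the local existence in dimension two, the real content here is the uniqueness together with the propagation of the claimed regularity $(u,B)\in C([0,T];B_{2,1}^0)\cap L^1([0,T];B_{2,1}^2)$ and $B\in C([0,T];B_{2,1}^1)$. To get existence within exactly this class I would run a Friedrichs-type approximation (or mollified iteration scheme), derive a priori estimates in $B_{2,1}^0\times B_{2,1}^1$, and pass to the limit; the parabolic smoothing from $-\nu\Delta u$ supplies the $L^1_T B_{2,1}^2$ control of $u$, while the magnetic equation, being a pure transport equation $\partial_t B + u\cdot\nabla B = B\cdot\nabla u$, propagates the $B_{2,1}^1$ regularity of $B_0$ provided $\nabla u \in L^1_T L^\infty$, which follows from $u\in L^1_T B_{2,1}^2 \hookrightarrow L^1_T \mathrm{Lip}$ in 2D.

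For the uniqueness argument, suppose $(u_1,B_1)$ and $(u_2,B_2)$ are two solutions in this class with the same data, and set $\delta u = u_1-u_2$, $\delta B = B_1-B_2$, $\delta p = p_1 - p_2$. These satisfy
\begin{equation*}
\begin{array}{l}
\partial_t \delta u + u_1\cdot\nabla \delta u + \nabla \delta p - \nu\Delta \delta u = -\,\delta u\cdot\nabla u_2 + B_1\cdot\nabla \delta B + \delta B\cdot\nabla B_2,\\
\partial_t \delta B + u_1\cdot\nabla \delta B = -\,\delta u\cdot\nabla B_2 + B_1\cdot\nabla \delta u + \delta B\cdot\nabla u_2,\\
{\rm div}\,\delta u = {\rm div}\,\delta B = 0,\qquad \delta u(0)=0,\ \delta B(0)=0.
\end{array}
\end{equation*}
The key point is a loss-of-derivative phenomenon: the $B_1\cdot\nabla\delta B$ term in the velocity equation and the $B_1\cdot\nabla\delta u$ term in the magnetic equation each cost one derivative, and there is no magnetic diffusion to recover it. The standard remedy is to estimate the difference \emph{one level below} the solution regularity: I would bound $\delta u$ in $\widetilde L^\infty_T B_{2,1}^{-1} \cap L^1_T B_{2,1}^1$ and $\delta B$ in $\widetilde L^\infty_T B_{2,1}^0$. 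Applying the Littlewood--Paley operator $\Delta_j$ to each equation, using the commutator estimates of Bony's decomposition for the transport terms $u_1\cdot\nabla$, the $L^2$ energy estimate on each dyadic block (the diffusion gives $\nu 2^{2j}\|\Delta_j\delta u\|_{L^2}$ on the left), and the product laws $B_{2,1}^0\times B_{2,1}^1 \to B_{2,1}^0$ together with $B_{2,1}^{-1}\times B_{2,1}^1 \to B_{2,1}^{-1}$ in 2D, I would sum against the weights $2^{-j}$ (for $\delta u$) and $2^{0\cdot j}$ (for $\delta B$), and absorb the $B_1\cdot\nabla$ cross-terms using $B_1\in L^\infty_T B_{2,1}^1$ — crucially, the derivative landing on $\delta B$ (resp.\ $\delta u$) is compensated because we measure $\delta B$ in $B_{2,1}^0$, one notch above the $B_{2,1}^{-1}$ norm of $\delta u$, so the two loss-of-derivative terms balance when the estimates are added. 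This yields a closed inequality of the form
\begin{equation*}
\|\delta u(t)\|_{B_{2,1}^{-1}} + \|\delta B(t)\|_{B_{2,1}^0} \le C\int_0^t \big(1 + \|u_1\|_{B_{2,1}^2} + \|u_2\|_{B_{2,1}^2} + \|B_1\|_{B_{2,1}^1} + \|B_2\|_{B_{2,1}^1}\big)\big(\|\delta u(s)\|_{B_{2,1}^{-1}} + \|\delta B(s)\|_{B_{2,1}^0}\big)\,ds,
\end{equation*}
where the integrand coefficient is in $L^1_T$ by the regularity class; Grönwall's lemma then forces $\delta u \equiv 0$ and $\delta B \equiv 0$.

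The main obstacle I anticipate is making the two loss-of-derivative terms cancel rigorously at the level of the dyadic blocks rather than just formally — in particular, controlling the commutator $[\Delta_j, u_1\cdot\nabla]\delta B$ and the high-high and high-low interactions in $B_1\cdot\nabla\delta u$ so that, after multiplying by the chosen weights and summing in $j$, the apparently-divergent contributions genuinely pair up (this is the structural reason the MHD system is well-behaved despite the missing diffusion). A secondary technical subtlety is the pressure term: one eliminates $\nabla\delta p$ by applying the Leray projector, and must check that the resulting nonlocal operator is harmless at the $B_{2,1}^{-1}$ regularity level, which it is since Leray projection is bounded on every homogeneous/inhomogeneous Besov space. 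Everything else — the linear transport-diffusion estimate in Besov spaces, the product and commutator lemmas, and the final Grönwall step — is standard and can be quoted.
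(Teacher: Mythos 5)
Your overall skeleton (estimating the differences one derivative below the solution regularity, $\delta u$ at level $-1$ and $\delta B$ at level $0$) matches the paper's, but the proposal has a genuine gap at its final step: the claim that the estimates close into a linear integral inequality in the $\ell^1$-based norms $B_{2,1}^{-1}\times B_{2,1}^{0}$ to which Gr\"onwall applies. In dimension $2$ the level $-1=-d/2$ is an endpoint: for the remainder term of Bony's decomposition of, e.g., $\delta u\cdot\nabla u_2$ one only has $\|\Delta_j R(\delta u,\nabla u_2)\|_{L^2}\lesssim 2^{j}\sum_{k\ge j-3}\|\Delta_k\delta u\|_{L^2}\|\tilde\Delta_k\nabla u_2\|_{L^2}$, and after multiplying by the weight $2^{-j}$ the resulting sum over $j\le k+3$ diverges; only the $\sup_j$, i.e.\ the $\dot B^{-1}_{2,\infty}$ norm, is controlled. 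So the difference estimate can only be closed with third index $\infty$: $\delta u\in L^\infty_t\dot B^{-1}_{2,\infty}\cap\tilde L^1_t\dot B^{1}_{2,\infty}$ and $\delta B\in L^\infty_t\dot B^{0}_{2,\infty}$. But then a second difficulty appears which your proposal does not address: the term $B_1\cdot\nabla\delta u$ in the $\delta B$ equation requires $\|\delta u\|_{L^1_t\dot B^{1}_{2,1}}$ (one needs $\ell^1$ summability to use the product law $\dot B^{1}_{2,1}\times\dot B^{1}_{2,1}\to\dot B^{1}_{2,1}$, essentially $\nabla\delta u\in L^1_tL^\infty$), whereas the parabolic smoothing of the velocity equation only yields $\tilde L^1_t\dot B^{1}_{2,\infty}$. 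Since there is no magnetic diffusion, this $\ell^\infty$-versus-$\ell^1$ mismatch cannot be absorbed by the left-hand side.

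The paper bridges exactly this gap with a logarithmic interpolation inequality (its Lemma 3.1), namely $\|\delta u\|_{L^1_t\dot B^{1}_{2,1}}\lesssim X(t)\log\bigl(e+V(t)/X(t)\bigr)$ with $X=\|\delta u\|_{L^\infty_t\dot B^{-1}_{2,\infty}}+\|\delta u\|_{\tilde L^1_t\dot B^{1}_{2,\infty}}$, and then closes the argument with Osgood's lemma rather than Gr\"onwall. This is the key idea missing from your proposal; without it the ``closed inequality'' you write down does not follow from the product and commutator estimates you invoke. (Also, the obstacle you anticipate --- an exact cancellation between the two loss-of-derivative terms --- is not what is used: each term is estimated separately, and the real difficulty is the summability index at the endpoint regularity, not the derivative count.)
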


\vskip .3in
\section{Preliminaries}
Let $\mathfrak{B}=\{\xi\in\mathbb{R}^d,\ |\xi|\le\frac{4}{3}\}$ and $\mathfrak{C}=\{\xi\in\mathbb{R}^d,\ \frac{3}{4}\le|\xi|\le\frac{8}{3}\}$. Choose two nonnegative smooth radial function $\chi,\ \varphi$ supported, respectively, in $\mathfrak{B}$ and $\mathfrak{C}$ such that
$$\chi(\xi)+\sum_{j\ge0}\varphi(2^{-j}\xi)=1,\ \ \xi\in\mathbb{R}^d,$$
$$\sum_{j\in\mathbb{Z}}\varphi(2^{-j}\xi)=1,\ \ \xi\in\mathbb{R}^d\setminus\{0\}.$$
We denote $\varphi_{j}=\varphi(2^{-j}\xi),$ $h=\mathfrak{F}^{-1}\varphi$ and $\tilde{h}=\mathfrak{F}^{-1}\chi,$ where $\mathfrak{F}^{-1}$ stands for the inverse Fourier transform. Then the dyadic blocks
$\Delta_{j}$ and $S_{j}$ can be defined as follows
$$\Delta_{j}f=\varphi(2^{-j}D)f=2^{jd}\int_{\mathbb{R}^d}h(2^jy)f(x-y)dy,$$
$$S_{j}f=\sum_{k\le j-1}\Delta_{k}f=\chi(2^{-j}D)f=2^{jd}\int_{\mathbb{R}^d}\tilde{h}(2^jy)f(x-y)dy.$$
Formally, $\Delta_{j}=S_{j}-S_{j-1}$ is a frequency projection to annulus $\{\xi:\ C_{1}2^j\le|\xi|\le C_{2}2^j\}$, and $S_{j}$ is a frequency projection to the ball $\{\xi:\ |\xi|\le C2^j\}$. One can easily verifies that with our choice of $\varphi$
$$\Delta_{j}\Delta_{k}f=0\ if \ |j-k|\ge2\ \ {\rm and}\ \  \Delta_{j}(S_{k-1}f\Delta_{k}f)=0\  if |j-k|\ge5.$$
With the introduction of $\Delta_{j}$ and $S_{j}$, let us recall the definition of the  Besov space.
\par Let $s\in \mathbb{R}$, $(p,q)\in[1,\infty]^2,$ the homogeneous space $\dot{B}_{p,q}^s$ is defined by
$$\dot{B}_{p,q}^{s}=\{f\in \mathfrak{S}';\ \|f\|_{\dot{B}_{p,q}^{s}}<\infty\},$$
where
\begin{equation*}
\|f\|_{\dot{B}_{p,q}^s}=\left\{\begin{aligned}
&\displaystyle (\sum_{j\in \mathbb{Z}}2^{sqj}\|\Delta_{j}f\|_{L^p}^{q})^\frac{1}{q},\ \ \ \ {\rm for} \ \ 1\le q<\infty,\\
&\displaystyle \sup_{j\in\mathbb{Z}}2^{sj}\|\Delta_{j}f\|_{L^p},\ \ \ \ \ \ \ \ \ {\rm for}\ \ q=\infty,\\
\end{aligned}
\right.
\end{equation*}
and $\mathfrak{S}'$ denotes the dual space of $\mathfrak{S}=\{f\in\mathcal{S}(\mathbb{R}^d);\ \partial^{\alpha}\hat{f}(0)=0;\ \forall\ \alpha\in \ \mathbb{N}^d $\ {\rm multi-index}\} and can be identified by the quotient space of $\mathcal{S'}/\mathcal{P}$ with the polynomials space $\mathcal{P}$.
\par Let $s>0,$ and $(p,q)\in [1,\infty]^2$, the inhomogeneous Besov space $B_{p,q}^s$ is defined by
$${B}_{p,q}^{s}=\{f\in \mathcal{S'}(\mathbb {R}^d);\ \|f\|_{{B}_{p,q}^{s}}<\infty\},$$
where
$$\|f\|_{B_{p,q}^s}=\|f\|_{L^p}+\|f\|_{\dot{B}_{p,q}^s}.$$
Let's recall space-time space.
\begin{defn}\label{d1}
Let $s\in\mathbb{R}.$ $1\le p,q,r\le \infty$, $I\subset\mathbb{R}$ is an interval. The homogeneous mixed time-space Besov space $\tilde{L}^r(I;\dot{B}_{p,q}^s)$
is defined as the set of all the distributions $f$ satisfying
$$\|f\|_{\tilde{L}^r(I;\dot{B}_{p,q}^s)}=\left\|2^{sj}\left(\int_{I}\|\Delta_{j}f(\tau)\|_{L^p}^rd\tau\right)^\frac{1}{r}\right\|_{l^q(\mathbb{Z})}<\infty.$$
\end{defn}
For convenience, we sometimes use $\tilde{L}^r_{t}\dot{B}_{p,q}^s$ and $ L^r_{t}\dot{B}_{p,q}^s$  to denote
$\tilde{L}^r(0,t;\dot{B}_{p,q}^s)$ and $L^r(0,t;\dot{B}_{p,q}^s)$.

\vskip .1in
Bernstein's inequalities are useful tools in dealing with Fourier localized functions
and these inequalities trade integrability for derivatives. The following proposition
provides Bernstein type inequalities for fractional derivatives.
\begin{prop}\label{p2.1}
Let $\alpha\ge0$. Let $1\le p\le q\le \infty$.
\begin{enumerate}
\item[1)] If $f$ satisfies
$$
\mbox{supp}\, \widehat{f} \subset \{\xi\in \mathbb{R}^d: \,\, |\xi|
\le K 2^j \},
$$
for some integer $j$ and a constant $K>0$, then
$$
\|(-\Delta)^\alpha f\|_{L^q(\mathbb{R}^d)} \le C_1\, 2^{2\alpha j +
j d(\frac{1}{p}-\frac{1}{q})} \|f\|_{L^p(\mathbb{R}^d)}.
$$
\item[2)] If $f$ satisfies
\begin{equation*}\label{spp}
\mbox{supp}\, \widehat{f} \subset \{\xi\in \mathbb{R}^d: \,\, K_12^j
\le |\xi| \le K_2 2^j \}
\end{equation*}
for some integer $j$ and constants $0<K_1\le K_2$, then
$$
C_1\, 2^{2\alpha j} \|f\|_{L^q(\mathbb{R}^d)} \le \|(-\Delta)^\alpha
f\|_{L^q(\mathbb{R}^d)} \le C_2\, 2^{2\alpha j +
j d(\frac{1}{p}-\frac{1}{q})} \|f\|_{L^p(\mathbb{R}^d)},
$$
where $C_1$ and $C_2$ are constants depending on $\alpha,p$ and $q$
only.
\end{enumerate}
\end{prop}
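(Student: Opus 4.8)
The plan is to represent the Fourier multipliers $(-\Delta)^{\alpha}$ and $(-\Delta)^{-\alpha}$, restricted to the frequency region where $\widehat f$ lives, as convolution against a kernel that is a parabolic rescaling of one fixed $L^1$-function, and then apply Young's convolution inequality; the factors $2^{2\alpha j}$ and $2^{jd(1/p-1/q)}$ are produced by the scaling.

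For part 1) I would fix $\tilde\chi\in C_c^{\infty}(\mathbb{R}^d)$ with $\tilde\chi\equiv1$ on $\{|\xi|\le K\}$. Since $\widehat f$ is supported in $\{|\xi|\le K2^j\}$ we have $\widehat f(\xi)=\tilde\chi(2^{-j}\xi)\widehat f(\xi)$, so $(-\Delta)^{\alpha}f=g_j*f$ with $g_j=\mathcal F^{-1}(|\xi|^{2\alpha}\tilde\chi(2^{-j}\xi))$; the change of variables $\xi=2^j\eta$ gives $g_j(x)=2^{(d+2\alpha)j}(\mathcal F^{-1}m_0)(2^jx)$ where $m_0(\xi)=|\xi|^{2\alpha}\tilde\chi(\xi)$. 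Then Young's inequality with $1+\tfrac1q=\tfrac1r+\tfrac1p$ gives
$$
\|(-\Delta)^{\alpha}f\|_{L^q}\le\|g_j\|_{L^r}\|f\|_{L^p}=2^{2\alpha j+jd(\frac1p-\frac1q)}\,\|\mathcal F^{-1}m_0\|_{L^r}\,\|f\|_{L^p},
$$
so it remains to verify $\mathcal F^{-1}m_0\in L^r(\mathbb{R}^d)$ for all $r\in[1,\infty]$. This is the only point that is not completely routine: when $2\alpha$ is not an even integer, $m_0$ is not smooth at the origin, so one cannot simply invoke ``$C_c^{\infty}$ symbol $\Rightarrow$ Schwartz kernel''. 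Instead one observes that $m_0$ is continuous and compactly supported (so $m_0\in L^1$ and $\mathcal F^{-1}m_0\in L^\infty$) and is $C^{\infty}$ away from $0$ with only a homogeneous $|\xi|^{2\alpha}$-type singularity there; an integration-by-parts estimate then yields $|\mathcal F^{-1}m_0(x)|\lesssim|x|^{-(d+2\alpha)}$ for $|x|\ge1$, and since $d+2\alpha>d$ this is integrable at infinity. Hence $\mathcal F^{-1}m_0\in L^1\cap L^\infty\subset\bigcap_{1\le r\le\infty}L^r$, finishing part 1).

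For the lower bound in part 2) I would argue dually. Fix $\tilde\varphi\in C_c^{\infty}(\mathbb{R}^d)$ supported in an annulus $\{c_1\le|\xi|\le c_2\}$ with $0<c_1<K_1\le K_2<c_2$ and $\tilde\varphi\equiv1$ on $\{K_1\le|\xi|\le K_2\}$. Since $\widehat f$ is supported in $\{K_12^j\le|\xi|\le K_22^j\}$, we have $f=n_j*(-\Delta)^{\alpha}f$ with $n_j=\mathcal F^{-1}(|\xi|^{-2\alpha}\tilde\varphi(2^{-j}\xi))$, and this time $n_0(\xi):=|\xi|^{-2\alpha}\tilde\varphi(\xi)$ lies in $C_c^{\infty}(\mathbb{R}^d)$ precisely because $\tilde\varphi$ vanishes near the origin; hence $\mathcal F^{-1}n_0\in\mathcal S(\mathbb{R}^d)\subset L^1$, $n_j(x)=2^{(d-2\alpha)j}(\mathcal F^{-1}n_0)(2^jx)$, and $\|n_j\|_{L^1}=2^{-2\alpha j}\|\mathcal F^{-1}n_0\|_{L^1}$. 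Young's inequality $L^1*L^q\to L^q$ then gives $\|f\|_{L^q}\le2^{-2\alpha j}\|\mathcal F^{-1}n_0\|_{L^1}\|(-\Delta)^{\alpha}f\|_{L^q}$, i.e.\ $C_1\,2^{2\alpha j}\|f\|_{L^q}\le\|(-\Delta)^{\alpha}f\|_{L^q}$ with $C_1=\|\mathcal F^{-1}n_0\|_{L^1}^{-1}$. Finally the upper bound in part 2) is just part 1) applied to the annulus $\{K_12^j\le|\xi|\le K_22^j\}$, which is contained in the ball $\{|\xi|\le K_22^j\}$.

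I do not anticipate a genuine obstacle: this is the standard ``rescaled kernel plus Young's inequality'' derivation of Bernstein's inequalities. The only step that needs attention is the origin singularity of $|\xi|^{2\alpha}$ in part 1), handled by the decay estimate above; in part 2) the annular localization keeps the symbol away from the origin and this issue does not arise.
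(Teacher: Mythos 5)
The paper does not actually prove Proposition \ref{p2.1}; it is quoted as a standard fact with pointers to the references on Besov spaces, so there is no in-paper argument to compare against. Your proof is the standard and correct one: localize the multiplier with a cutoff adapted to the support of $\widehat f$, rescale to a fixed kernel, and apply Young's inequality, which produces exactly the factors $2^{2\alpha j}$ and $2^{jd(\frac1p-\frac1q)}$. You also correctly isolate the one genuinely non-routine point, namely that for $2\alpha$ not an even integer the symbol $|\xi|^{2\alpha}\tilde\chi(\xi)$ is not smooth at the origin, so its inverse Fourier transform is not Schwartz; the decay bound $|\mathcal F^{-1}m_0(x)|\lesssim|x|^{-(d+2\alpha)}$ (obtained by a dyadic decomposition near the origin together with integration by parts on each piece) is the right fix and suffices since $d+2\alpha>d$, while in part 2) the annular support removes the issue entirely. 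The only cosmetic remark is that the constants inevitably also depend on $d$ and on $K$ (resp.\ $K_1,K_2$), not just on $\alpha,p,q$ as the statement asserts, but that is a defect of the statement rather than of your proof.
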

For more details about Besov space  such as  some useful embedding relations and
the  equivalency
$$\|f\|_{\dot{B}_{2,2}^s}\approx\|f\|_{\dot{H}^s},\qquad \|f\|_{B_{2,2}^s}\approx\|f\|_{H^s},$$
 see \cite{2},\cite{3} and \cite{4}.
\vskip .3in
\section{Proof of The main result}
Before the proof of Theorem \ref{t1.1}, we need the following lemma.
\begin{lem}\label{l3.1}
$\forall\  t>0,$
\begin{equation}\label{3.1}
\int_{0}^{t}\|f(\tau)\|_{\dot{B}_{2,1}^1}d\tau\le C(\|f\|_{\tilde{L}^1_{t}\dot{B}_{2,\infty}^1}+\|f\|_{L^\infty_{t}\dot{B}_{2,\infty}^{-1}})\log\left(e+\frac{\|f\|_{\tilde{L}^1_{t}\dot{B}_{2,\infty}^0}+\|f\|_{\tilde{L}^1_{t}\dot{B}_{2,\infty}^2} }{\|f\|_{\tilde{L}^1_{t}\dot{B}_{2,\infty}^1}+\|f\|_{L^\infty_{t}\dot{B}_{2,\infty}^{-1}}}\right).
\end{equation}
\end{lem}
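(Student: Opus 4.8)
The plan is to prove \eqref{3.1} by splitting the sum defining $\|f(\tau)\|_{\dot B_{2,1}^1}$ into low, middle, and high frequencies at two thresholds $N_1 < N_2$ chosen later, and optimizing over them. Concretely, for each fixed $\tau$ one writes
\[
\|f(\tau)\|_{\dot B_{2,1}^1}=\sum_{j<N_1}2^j\|\Delta_j f(\tau)\|_{L^2}+\sum_{N_1\le j\le N_2}2^j\|\Delta_j f(\tau)\|_{L^2}+\sum_{j>N_2}2^j\|\Delta_j f(\tau)\|_{L^2}.
\]
In the low-frequency block I would write $2^j = 2^{2j}\cdot 2^{-j}$ and bound $\sum_{j<N_1}2^{2j}2^{-j}\|\Delta_j f(\tau)\|_{L^2}\le \big(\sum_{j<N_1}2^j\big)\|f(\tau)\|_{\dot B_{2,\infty}^{-1}}\lesssim 2^{N_1}\|f(\tau)\|_{\dot B_{2,\infty}^{-1}}$; in the high-frequency block I would write $2^j = 2^{2j}\cdot 2^{-j}$ and bound $\sum_{j>N_2}2^{-j}2^{2j}\|\Delta_j f(\tau)\|_{L^2}\lesssim 2^{-N_2}\|f(\tau)\|_{\dot B_{2,\infty}^{2}}$. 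The middle block is estimated trivially by $(N_2-N_1)\|f(\tau)\|_{\dot B_{2,\infty}^{1}}$.

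Next I would integrate in $\tau$ over $[0,t]$. The subtlety is that the three quantities on the right of \eqref{3.1} are time-integrated \emph{inside} the $\ell^\infty_j$ norm (the tilde spaces) or are $L^\infty_t$, so to get those norms I must be careful to take $\sup_j$ after integrating. So I would instead keep the thresholds $N_1,N_2$ \emph{constant in time} — i.e. chosen once and for all depending on $t$ — and estimate
\[
\int_0^t\|f(\tau)\|_{\dot B_{2,1}^1}d\tau
\le C\,2^{N_1}\|f\|_{L^\infty_t\dot B_{2,\infty}^{-1}}\,t
+C(N_2-N_1)\|f\|_{\tilde L^1_t\dot B_{2,\infty}^{1}}
+C\,2^{-N_2}\|f\|_{\tilde L^1_t\dot B_{2,\infty}^{2}}.
\]
Wait — the factor $t$ on the first term is not allowed by the claimed inequality, so instead for the low block I should use $\sum_{j<N_1}2^j\|\Delta_j f\|_{L^2} \le \sum_{j<N_1}\big(2^{2j}\|\Delta_j f\|_{L^2}\big)^{\theta}\big(2^{-j}\|\Delta_j f\|_{L^2}\big)^{1-\theta}$ type interpolation is awkward; cleaner is to interpolate $2^j\|\Delta_j f\|_{L^2}$ directly between the $\dot B^0$ and $\dot B^2$ norms in the middle block and between $\dot B^{-1}$ and $\dot B^0$ at the edges. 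I would in fact bound the low block by $\sum_{j<N_1}2^j\|\Delta_j f\|_{L^2}\le \sum_{j<N_1}2^{j}\cdot\big(2^{-j}\|\Delta_j f\|_{L^2}\big)$-nope; the honest route is: low block $\le 2^{N_1}\sup_j 2^{-j}\|\Delta_j f\|_{L^2}$ after summing the geometric series $\sum_{j<N_1}2^{2j}=O(2^{2N_1})$, giving $2^{2N_1}\|f\|_{\dot B_{2,\infty}^{-1}}$; and then because this must be time-integrable one uses $\|f\|_{L^1_t\dot B^{-1}_{2,\infty}}\le$ (the $\tilde L^1_t\dot B^1_{2,\infty}$ and $L^\infty_t\dot B^{-1}_{2,\infty}$ data) — but this is circular.

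The correct and standard device is to use \emph{two} interpolations that each avoid the endpoint $\dot B^1$: bound the low-plus-middle part $\sum_{j\le N}2^j\|\Delta_j f\|_{L^2}$ by interpolating $2^j\|\Delta_j f\|_{L^2}\lesssim \big(2^{-j}\|\Delta_j f\|_{L^2}\big)^{1/3}\big(2^{2j}\|\Delta_j f\|_{L^2}\big)^{1/3}\cdots$ — actually simplest: $\sum_{j\le N}2^j a_j \le 2^{2N}\sup_j 2^{-j}a_j$ and $\sum_{j> N}2^j a_j \le 2^{-N}\sup_j 2^{2j}a_j$ with $a_j=\|\Delta_j f\|_{L^2}$, so pointwise in $\tau$,
\[
\|f(\tau)\|_{\dot B_{2,1}^1}\le C\big(2^{2N}\|f(\tau)\|_{\dot B_{2,\infty}^{-1}}+2^{-N}\|f(\tau)\|_{\dot B_{2,\infty}^{2}}\big).
\]
I would then integrate in $\tau$, pull $2^{2N}\|f\|_{L^\infty_t\dot B^{-1}_{2,\infty}}$ out of the first integral (it costs a $t$, which I absorb by noting the first right-hand term of \eqref{3.1} actually contains $\|f\|_{\tilde L^1_t\dot B^1_{2,\infty}}$, not $\|f\|_{\dot B^{-1}}$ alone — so in fact I should split at $\dot B^1$ not $\dot B^{-1}$), ultimately getting $\int_0^t\|f\|_{\dot B^1_{2,1}}\le C(2^{N}A+2^{-N}B)$ with $A=\|f\|_{\tilde L^1_t\dot B^1_{2,\infty}}+\|f\|_{L^\infty_t\dot B^{-1}_{2,\infty}}$ playing the role of the "low" data after a further interpolation between $\dot B^{-1}$ and $\dot B^1$ uses the $L^\infty_t$ norm to control the time integral, and $B=\|f\|_{\tilde L^1_t\dot B^0_{2,\infty}}+\|f\|_{\tilde L^1_t\dot B^2_{2,\infty}}$. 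Finally I choose $N$ so that $2^N \approx \sqrt{B/A}$, or rather $N\approx \log_2(B/A)$ with a cutoff, which produces exactly the logarithmic factor $\log(e+B/A)$ after treating separately the regime $B\le A$ (where one just takes $N$ bounded) and $B>A$.

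The main obstacle is the bookkeeping of \emph{which} Besov-in-time norm controls each frequency block so that no stray factor of $t$ survives: the low frequencies must be handled via an interpolation inequality of the form $\|\Delta_j f\|_{L^1_t L^2}\le \|\Delta_j f\|_{L^\infty_t L^2}^{\theta}\|\Delta_j f\|_{L^1_t L^2}^{1-\theta}$-style splitting that trades the $\dot B^{-1}$-in-$L^\infty_t$ control against the $\dot B^{1}$-in-$\tilde L^1_t$ control, because $2^{-1\cdot j}$ and $2^{1\cdot j}$ straddle the target weight $2^{0\cdot j}$ — wait, target weight is $2^{1\cdot j}$, which is an endpoint of that pair, so one genuinely needs the three-point interpolation: weight $2^j$ sits strictly between $2^{-j}$ (for $\dot B^{-1}$, in $L^\infty_t$) is below it, while $2^{2j}$ (for $\dot B^2$, in $\tilde L^1_t$) is above, and also between $2^0$ ($\dot B^0$) and $2^{2j}$ ($\dot B^2$) both in $\tilde L^1_t$. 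So the clean statement is: high block uses $\dot B^0_{2,\infty}$–$\dot B^2_{2,\infty}$ interpolation in $\tilde L^1_t$ (giving $\dot B^1$ at the midpoint with the sum over $j>N$ converging), and low block uses $\dot B^{-1}_{2,\infty}$ in $L^\infty_t$ times $\dot B^1_{2,\infty}$ in $\tilde L^1_t$ via Hölder in time after a geometric summation over $j\le N$; one must verify the exponents so that the time-integral of the low block is bounded by $2^{(1+1)\cdot\frac{N}{?}}\cdots$ — I expect this to reduce, after choosing the Hölder split $\tfrac1{L^1_t}=\tfrac{\theta}{L^\infty_t}+\tfrac{1-\theta}{L^1_t}$ with $\theta\to 0$, to the stated form, but getting the constants and the $\log(e+\cdot)$ (rather than $\log(2+\cdot)$ or a bare $\log$) right is the delicate point and will require the case distinction $B\lessgtr A$ described above.
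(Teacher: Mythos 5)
There is a genuine gap: your attempt never settles on a low‑frequency estimate that avoids a stray factor of $t$, and the final reduction you write down cannot produce the logarithm. On the first point, you repeatedly try to control the block $\sum_{j< N_1}2^{j}\|\Delta_j f\|_{L^1_tL^2}$ by $\|f\|_{L^\infty_t\dot B^{-1}_{2,\infty}}$, correctly observe that this costs a factor of $t$, and then propose to remove it by a H\"older interpolation in time; but $\|g\|_{L^1_t}\le t^{\theta}\|g\|_{L^\infty_t}^{\theta}\|g\|_{L^1_t}^{1-\theta}$ always carries $t^{\theta}$, and letting $\theta\to0$ just returns the trivial inequality, so this device cannot work. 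The resolution is already written in the statement you are proving: the numerator of the logarithm contains $\|f\|_{\tilde L^1_t\dot B^{0}_{2,\infty}}$, and the low block should be summed against \emph{that} norm, $\sum_{j<-N}2^{j}\|\Delta_j f\|_{L^1_tL^2}\le\bigl(\sum_{j<-N}2^{j}\bigr)\sup_k\|\Delta_k f\|_{L^1_tL^2}=2^{-N}\|f\|_{\tilde L^1_t\dot B^{0}_{2,\infty}}$, exactly symmetric to the high block $\le 2^{-N}\|f\|_{\tilde L^1_t\dot B^{2}_{2,\infty}}$. This is what the paper does; the $\|f\|_{L^\infty_t\dot B^{-1}_{2,\infty}}$ term is never used in the frequency decomposition at all — it is only added to the denominator and prefactor at the end, which is harmless since enlarging $A$ in $A\log(e+B/A)$ only helps.

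On the second point, your concluding form $\int_0^t\|f\|_{\dot B^1_{2,1}}\,d\tau\le C(2^{N}A+2^{-N}B)$ is structurally wrong for a log estimate: optimizing $2^{N}A+2^{-N}B$ over $N$ gives $\sqrt{AB}$ (and your alternative $N\approx\log_2(B/A)$ gives $A+B$), neither of which is $A\log(e+B/A)$. The logarithm arises only because the \emph{middle} frequency window contributes linearly in its width, namely $(2N+1)\|f\|_{\tilde L^1_t\dot B^{1}_{2,\infty}}$ — a term you correctly wrote in your very first display and then lost in the subsequent revisions. The correct final inequality is $\int_0^t\|f\|_{\dot B^1_{2,1}}\,d\tau\le 2^{-N}B+(2N+1)\|f\|_{\tilde L^1_t\dot B^{1}_{2,\infty}}$, after which $N\approx\log_2(e+B/A)$ gives $2^{-N}B\le A$ and the middle term $\le CA\log(e+B/A)$. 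So the skeleton of a correct proof (three‑way split, geometric sums at both ends, linear middle term, optimize $N$) appears in your opening lines, but the argument you actually pursue abandons it and does not close.
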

\begin{proof}
Using the definition of homogeneous Besov space, we have
\begin{equation*}
\begin{aligned}
\|f\|_{L^1_{t}\dot{B}_{2,1}^1}=&\sum_{j\in\mathbb{Z}}2^j\|\Delta_{j}f\|_{L^1_{t}L^2}\\
=&\sum_{j<-N}2^j\|\Delta_{j}f\|_{L^1_{t}L^2}
+\sum_{-N\le j\le N}2^j\|\Delta_{j}f\|_{L^1_{t}L^2}+\sum_{j>N}2^j\|\Delta_{j}f\|_{L^1_{t}L^2}\\
\le& 2^{-N}\|f\|_{\tilde{L}^1_{t}\dot{B}_{2,\infty}^0}+2N\|f\|_{\tilde{L}^1_{t}\dot{B}_{2,\infty}^1}+2^{-N}\|f\|_{\tilde{L}^1_{t}\dot{B}_{2,\infty}^2}.
\end{aligned}
\end{equation*}
Choosing
$$N=\log\left(e+\frac{\|f\|_{\tilde{L}^1_{t}\dot{B}_{2,\infty}^0}+\|f\|_{\tilde{L}^1_{t}\dot{B}_{2,\infty}^2}                             }{\|f\|_{\tilde{L}^1\dot{B}_{2,\infty}^1}+\|f\|_{L^\infty_{t}\dot{B}_{2,\infty}^{-1}}}\right),$$
we can get the inequality (\ref{3.1})
\end{proof}
Now, we begin the proof of Theorem \ref{t1.1}. The existence of the solution to (\ref{1.1}) was obtained in \cite{1}, while the continuity in time
can be obtained by the definition of Besov space. So here we only deal with the uniqueness.
Let $(u_{j},B_{j}),\ j=1,2,$ be two solution of (\ref{1.1}), denote $\delta u=u_{1}-u_{2}$, $\delta B=B_{1}-B_{2}$ and $\delta p=p_{1}-p_{2}$, then we obtain
\begin{equation}\label{3.2}
\partial_{t}\delta u+(u_{1}\cdot\nabla)\delta u+(\delta u\cdot\nabla)u_{2}-\nu\Delta\delta u+\nabla\delta p=(B_{1}\cdot\nabla)\delta B+(\delta B\cdot\nabla)B_{2}
\end{equation}
and
\begin{equation}\label{3.3}
\partial_{t}\delta B+(u_{1}\cdot\nabla)\delta B+(\delta u\cdot\nabla)B_{2}=(B_{1}\cdot\nabla)\delta u+(\delta B\cdot\nabla)u_{2}.
\end{equation}
First, we consider (\ref{3.2}). By a standard argument, we have
\begin{equation*}
\begin{aligned}
\frac{d}{dt}\|\Delta_{j}\delta u\|_{L^2}&+\nu2^{2j}\|\Delta_{j}\delta u\|_{L^2}\le \|[\Delta_{j},u_{1}\cdot\nabla]\delta u\|_{L^2}\\
&+\|\Delta_{j}(\delta u\cdot\nabla u_{2})\|_{L^2}+\|\Delta_{j}(B_{1}\cdot\nabla\delta B)\|_{L^2}+\|\Delta_{j}(\delta B\cdot\nabla B_{2})\|_{L^2},
\end{aligned}
\end{equation*}
which with Gronwall's inequality yields that
\begin{equation*}
\begin{aligned}
\|\Delta_{j}\delta u\|_{L^2}\le& \int_{0}^{t} e^{\nu2^{2j}(\tau-t)}(\|[\Delta_{j},u_{1}\cdot\nabla]\delta u\|_{L^2}\\
&+\|\Delta_{j}(\delta u\cdot\nabla u_{2})\|_{L^2}+\|\Delta_{j}(B_{1}\cdot\nabla\delta B)\|_{L^2}+\|\Delta_{j}(\delta B\cdot\nabla B_{2})\|_{L^2})d\tau.
\end{aligned}
\end{equation*}
Taking the $L^r(0,t)$ norm, and using Young's inequality to obtain
\begin{equation*}
\begin{aligned}
\|\Delta_{j}\delta u\|_{L^r_{t}L^2}\le& \|e^{-\nu2^{2j}\tau}\|_{L^r_{t}} (\|[\Delta_{j},u_{1}\cdot\nabla]\delta u\|_{L^1_{t}L^2}\\
&+\|\Delta_{j}(\delta u\cdot\nabla u_{2})\|_{L^1_{t}L^2}+\|\Delta_{j}(B_{1}\cdot\nabla\delta B)\|_{L^1_{t}L^2}+\|\Delta_{j}(\delta B\cdot\nabla B_{2})\|_{L^1_{t}L^2})\\
\le&(\nu2^{2j}r)^{-\frac{1}{r}}(\|[\Delta_{j},u_{1}\cdot\nabla]\delta u\|_{L^1_{t}L^2}\\
&+\|\Delta_{j}(\delta u\cdot\nabla u_{2})\|_{L^1_{t}L^2}+\|\Delta_{j}(B_{1}\cdot\nabla\delta B)\|_{L^1_{t}L^2}+\|\Delta_{j}(\delta B\cdot\nabla B_{2})\|_{L^1_{t}L^2}).
\end{aligned}
\end{equation*}
Multiplying $2^{-j}$, and taking the $l^\infty$ norm, we obtain
\begin{equation*}
\begin{aligned}
\nu^\frac{1}{r}\|\delta u\|_{\tilde{L}^r_{t}\dot{B}_{2,\infty}^{-1+\frac{2}{r}}}\le& \sup_{j\in\mathbb{Z}}2^{-j}(\|[\Delta_{j},u_{1}\cdot\nabla]\delta u\|_{L^1_{t}L^2}\\
&+\|\Delta_{j}(\delta u\cdot\nabla u_{2})\|_{L^1_{t}L^2}+\|\Delta_{j}(B_{1}\cdot\nabla\delta B)\|_{L^1_{t}L^2}+\|\Delta_{j}(\delta B\cdot\nabla B_{2})\|_{L^1_{t}L^2})\\
=&K_{1}+K_{2}+K_{3}+K_{4},
\end{aligned}
\end{equation*}
where
\begin{align*}
&K_1 =\sup_{j\in\mathbb{Z}}2^{-j}\|[\Delta_{j},u_{1}\cdot\nabla]\delta u\|_{L^1_{t}L^2}, \qquad K_2 = \sup_{j\in\mathbb{Z}}2^{-j}\|\Delta_{j}(\delta u\cdot\nabla u_{2})\|_{L^1_{t}L^2},\\
&K_3 = \sup_{j\in\mathbb{Z}}2^{-j}\|\Delta_{j}(B_{1}\cdot\nabla\delta B)\|_{L^1_{t}L^2}, \qquad K_4 =\sup_{j\in\mathbb{Z}}2^{-j}\|\Delta_{j}(\delta B\cdot\nabla B_{2})\|_{L^1_{t}L^2}.
\end{align*}
In the following, we will bound $K_{i}$, $i=1,2,3,4.$ By homogeneous Bony decomposition, we can split $K_{1}$ into four parts,
\begin{equation}\label{3.4}
\begin{aligned}
K_{1}\le& \sup_{j\in\mathbb{Z}}2^{-j}\sum_{|k-j|\le4}\|[\Delta_{j},S_{k-1}u_{1}\cdot\nabla]\Delta_{k}\delta u\|_{L^1_t L^2}
+\sup_{j\in\mathbb{Z}}2^{-j}\sum_{|k-j|\le4}\|\Delta_{j}(\Delta_{k}u_{1}\cdot\nabla S_{k-1}\delta u)\|_{L^1_t L^2}\\
&+\sup_{j\in\mathbb{Z}}2^{-j}\sum_{k\ge j-3}\|\Delta_{k}u_{1}\cdot\nabla\Delta_{j}S_{k+1}\delta u\|_{L^1_t L^2}
+\sup_{j\in\mathbb{Z}}2^{-j}\sum_{k\ge j-3}\|\Delta_{j}(\Delta_{k}u_1 \cdot\nabla\tilde{\Delta}_k\delta u)\|_{L^1_t L^2}\\
=&K_{11}+K_{12}+K_{13}+K_{14},
\end{aligned}
\end{equation}
where $\tilde{\Delta}_{k}=\Delta_{k-1}+\Delta_{k}+\Delta_{k+1}.$\\
By H\"{o}lder's inequality, standard commutator estimate and Bernstein's inequality,
$$K_{11}\le C\sup_{j\in\mathbb{Z}}2^{-j}\|\nabla u_{1}\|_{L^{1}_{t}L^\infty}\|\Delta_{j}\delta u\|_{L^\infty_{t}L^2}\le C\|u_{1}\|_{L^1_{t}\dot{B}_{2,1}^2}\|\delta u\|_{L^\infty_{t}\dot{B}_{2,\infty}^{-1}},$$
$$K_{12}\le C\sup_{j\in\mathbb{Z}}2^{-j}\|\Delta_{j} u_{1}\|_{L^{1}_{t}L^\infty}\|\nabla S_{j-1}\delta u\|_{L^\infty_{t}L^2}\le C\|u_{1}\|_{L^1_{t}\dot{B}_{2,1}^2}\|\delta u\|_{L^\infty_{t}\dot{B}_{2,\infty}^{-1}},$$
\begin{align*}
K_{13}\le& C\sup_{j\in\mathbb{Z}}2^{-j}\sum_{k\ge j-3}2^j\|\Delta_{j}\delta u\|_{L^\infty_{t}L^2}\|\Delta_{k}u_{1}\|_{L^1_{t}L^\infty}\\
\le& C\sup_{j\in\mathbb{Z}}\sum_{k\ge j-3}2^{j-k}2^{-j}\|\Delta_{j}\delta u\|_{L^\infty_{t}L^2}2^k\|\Delta_{k}u_{1}\|_{L^1_{t}L^\infty}\\
\le& C\|u_{1}\|_{L^1_{t}\dot{B}_{2,1}^2}\|\delta u\|_{L^\infty_{t}\dot{B}_{2,\infty}^{-1}}
\end{align*}
and
$$K_{14}\le C\sup_{j\in\mathbb{Z}}\sum_{k\ge j-3}2^j\|\Delta_{k}u_{1}\|_{L^1_{t}L^2}\|\tilde{\Delta}_{k}\delta u\|_{L^\infty_{t}L^2}\le C\|u_{1}\|_{L^1_{t}\dot{B}_{2,1}^2}\|\delta u\|_{L^\infty_{t}\dot{B}_{2,\infty}^{-1}}.$$
Collecting the estimates above in (\ref{3.4}), we obtain
$$K_{1}\le C\|u_{1}\|_{L^1_{t}\dot{B}_{2,1}^2}\|\delta u\|_{L^\infty_{t}\dot{B}_{2,\infty}^{-1}}.$$
By homogeneous Bony decomposition again,
\begin{align*}
K_{2}\le& \sup_{j\in\mathbb{Z}}2^{-j}\sum_{|k-j|\le4}\|\Delta_{j}(\Delta_{k}\delta u\cdot\nabla S_{k-1}u_{2})\|_{L^1_{t}L^2}\\
&+\sup_{j\in\mathbb{Z}}2^{-j}\sum_{|k-j|\le4}\|\Delta_{j}(S_{k-1}\delta u\cdot\nabla \Delta_{k}u_{2})\|_{L^1_{t}L^2}\\
&+\sup_{j\in\mathbb{Z}}2^{-j}\sum_{k\ge j-3}\|\Delta_{j}(\Delta_{k}\delta u\cdot\nabla \tilde{\Delta}_{k}u_{2})\|_{L^1_{t}L^2}\\
=&K_{21}+K_{22}+K_{23}.
\end{align*}
By H\"{o}lder's inequality and Bernstein's inequality,
$$K_{21}\le C\|\nabla u_{2}\|_{L^1_{t}L^\infty}\sup_{j\in\mathbb{Z}}2^{-j}\|\Delta_{j}\delta u\|_{L^\infty_{t}L^2}\le C\|u_{2}\|_{L^1_{t}\dot{B}_{2,1}^2}\|\delta u\|_{L^\infty_{t}\dot{B}_{2,\infty}^{-1}},$$
\begin{align*}
K_{22}\le& C\sup_{j\in\mathbb{Z}}2^{-j}\|S_{j-1}\delta u\|_{L^\infty_{t}L^\infty}\|\nabla \Delta_{j}u_{2}\|_{L^1_{t}L^2}\\
\le& C\sup_{j\in\mathbb{Z}}2^j\|\nabla\Delta_{j}u\|_{L^1_{t}L^2}2^{-j}\|S_{j-1}\delta u\|_{L^\infty_{t}L^2}\\
\le& C\|u_{2}\|_{L^1_{t}\dot{B}_{2,1}^2}\|\delta u\|_{L^\infty_{t}\dot{B}_{2,\infty}^{-1}},
\end{align*}
\begin{align*}
K_{23}\le& C\sup_{j\in\mathbb{Z}}\sum_{k\ge j-3}2^j\|\Delta_{k}\delta u\|_{L^\infty_{t}L^2}\|\tilde{\Delta}_{k} u_{2}\|_{L^1_{t}L^2}\\
\le& C\sup_{j\in\mathbb{Z}}\sum_{k\ge j-3}2^{j-k}2^{2k}\|\tilde{\Delta}_{k}u_{2}\|_{L^1_{t}L^2}2^{-k}\|\Delta_{k}\delta u\|_{L^\infty_{t}L^2}\\
\le& C\|u_{2}\|_{L^1_{t}\dot{B}_{2,1}^2}\|\delta u\|_{L^\infty_{t}\dot{B}_{2,\infty}^{-1}}.
\end{align*}
Thus we have
$$K_{2}\le C\|u_{2}\|_{L^1_{t}\dot{B}_{2,1}^2}\|\delta u\|_{L^\infty_{t}\dot{B}_{2,\infty}^{-1}}.$$
Similarly,  we can bound $K_{3}$ and $K_{4}$ as follows:
$$K_{3}\le \|B_{1}\cdot\nabla \delta B\|_{\tilde{L}^1_{t}\dot{B}_{2,\infty}^{-1}}\le \int_{0}^{t}\|B_{1}\cdot\nabla\delta B\|_{\dot{B}_{2,\infty}^{-1}}d\tau\le
\int_{0}^{t}\|B_{1}\|_{\dot{B}_{2,1}^1}\|\delta B\|_{\dot{B}_{2,\infty}^0}d\tau
$$
and
$$K_{4}\le \|\delta B\cdot\nabla B_{2}\|_{\tilde{L}^1_{t}\dot{B}_{2,\infty}^{-1}}\le \int_{0}^{t}\|\delta B\cdot\nabla B_{2}\|_{\dot{B}_{2,\infty}^{-1}}d\tau\le
\int_{0}^{t}\|B_{2}\|_{\dot{B}_{2,1}^1}\|\delta B\|_{\dot{B}_{2,\infty}^0}d\tau.
$$
Therefore,
\begin{equation}\label{3.5}
\begin{aligned}
\|\delta u\|_{L^\infty_{t}\dot{B}_{2,\infty}^{-1}}&+\nu\|\delta u\|_{\tilde{L}^1_{t}\dot{B}_{2,\infty}^1}\\
\le& C\|(u_{1},u_{2})\|_{L^1_{t}\dot{B}_{2,1}^2}\|\delta u\|_{L^\infty_{t}\dot{B}_{2,\infty}^{-1}}+C\int_{0}^{t}\|(B_{1},B_{2})\|_{\dot{B}_{2,1}^1}\|\delta B\|_{\dot{B}_{2,\infty}^0}d\tau.
\end{aligned}
\end{equation}
Next, we consider (\ref{3.3}), we have the following estimate,
\begin{equation*}
\begin{aligned}
\frac{d}{dt}\|\delta B\|_{\dot{B}_{2,\infty}^0}\le& \sup_{j\in\mathbb{Z}}\|[\Delta_{j},u_{1}\cdot\nabla]\delta B\|_{L^2}\\
&+\sup_{j\in\mathbb{Z}}\|\Delta_{j}(\delta u\cdot\nabla B_{2})\|_{L^2}+\sup_{j\in\mathbb{Z}}\|\Delta_{j}(B_{1}\cdot\nabla \delta u)\|_{L^2}+\sup_{j\in\mathbb{Z}}\|\Delta_{j}(\delta B\cdot\nabla u_{2})\|_{L^2}\\
=&J_{1}+J_{2}+J_{3}+J_{4}.
\end{aligned}
\end{equation*}
By homogeneous Bony decomposition,
\begin{equation*}
\begin{aligned}
J_{1}\le& \sup_{j\in\mathbb{Z}}\sum_{|k-j|\le 4}\|[\Delta_{j},S_{k-1}u_{1}\cdot\nabla]\Delta_{k}\delta B\|_{L^2}+\sup_{j\in\mathbb{Z}}\sum_{|k-j|\le4}\|\Delta_{j}(\Delta_{k}u_{1}\cdot\nabla S_{k-1}\delta B)\|_{L^2}\\
&+\sup_{j\in\mathbb{Z}}\sum_{k\ge j-3}\|\Delta_{k}u_{1}\cdot\nabla S_{k+1}\Delta_{j}\delta B\|_{L^2}+
\sup_{j\in\mathbb{Z}}\sum_{k\ge j-3}\|\Delta_{j}(\Delta_{k}u_{1}\cdot\nabla\tilde{\Delta}_{k}\delta B)\|_{L^2}\\
=&J_{11}+J_{12}+J_{13}+J_{14}.
\end{aligned}
\end{equation*}
By H\"{o}lder's inequality and Bernstein's inequality,
$$J_{11}\le C\sup_{j\in\mathbb{Z}}\|\nabla u_{1}\|_{L^\infty}\|\Delta_{j}\delta B\|_{L^2}\le C\|u_{1}\|_{\dot{B}_{2,1}^2}\|\delta B\|_{\dot{B}_{2,\infty}^0},$$
$$J_{12}\le C\sup_{j\in\mathbb{Z}}2^{2j}\|\Delta_{j}u_{1}\|_{L^2}2^{-2j}\|\nabla S_{j-1}\delta B\|_{L^\infty}\le C\|u_{1}\|_{\dot{B}_{2,1}^2}\|\delta B\|_{\dot{B}_{2,\infty}^0},$$
\begin{align*}
J_{13}\le& C\sup_{j\in\mathbb{Z}}\sum_{k\ge j-3}2^j\|\Delta_{k}u\|_{L^\infty}\|\Delta_{j}\delta B\|_{L^2}\\
\le& C\sup_{j\in\mathbb{Z}}\sum_{k\ge j-3}2^{j-k}2^{2k}\|\Delta_{k}u\|_{L^2}\|\Delta_{j}\delta B\|_{L^2}
\le C\|u_{1}\|_{\dot{B}_{2,1}^2}\|\delta B\|_{\dot{B}_{2,\infty}^0},
\end{align*}
$$J_{14}\le C\sup_{j\in\mathbb{Z}}\sum_{k\ge j-3}2^{2j}\|\Delta_{k}u\|_{L^2}\|\tilde{\Delta}_{k}\delta B\|_{L^2}\le C\|u_{1}\|_{\dot{B}_{2,1}^2}\|\delta B\|_{\dot{B}_{2,\infty}^0}.$$
Hence we have
$$J_{1}\le C\|u_{1}\|_{\dot{B}_{2,1}^2}\|\delta B\|_{\dot{B}_{2,\infty}^0}.$$
By the inequality
\begin{equation*}
\|fg\|_{\dot{B}_{2,1}^{1}}\le C\|f\|_{\dot{B}_{2,1}^{1}}\|g\|_{\dot{B}_{2,1}^{1}},
\end{equation*}
(see, e.g., \cite{5}), we have
$$J_{2}+J_{3}\le \|\delta u\cdot \nabla B_{2}\|_{\dot{B}_{2,\infty}^0}+\|B_{1}\cdot \nabla\delta u\|_{\dot{B}_{2,\infty}^0}\le C\|\delta u\|_{\dot{B}_{2,1}^1}\|(B_{1},B_{2})\|_{\dot{B}_{2,1}^1}.$$
Finally, we bound $J_{4}$. By homogeneous Bony decomposition,
\begin{align*}
J_{4}\le& \sup_{j\in \mathbb{Z}}\sum_{|k-j|\le4}\|\Delta_{j}(\Delta_{k}\delta B\cdot\nabla S_{k-1}u_{2})\|_{L^2}+
\sup_{j\in \mathbb{Z}}\sum_{|k-j|\le4}\|\Delta_{j}(S_{k-1}\delta B\cdot\nabla \Delta_{k}u_{2})\|_{L^2}\\
&+\sup_{j\in \mathbb{Z}}\sum_{k\ge j-3}\|\Delta_{j}(\Delta_{k}\delta B\cdot\nabla\tilde{\Delta}_{k}u_{2})\|_{L^2}\\
=&J_{41}+J_{42}+J_{43},
\end{align*}
where
$$J_{41}\le C\sup_{j\in\mathbb{Z}}\|\nabla u_{2}\|_{L^\infty}\|\Delta_{j}\delta B\|_{L^2}\le C\|u_{2}\|_{\dot{B}_{2,1}^2}\|\delta B\|_{\dot{B}_{2,\infty}^0},$$
$$J_{42}\le C\sup_{j\in\mathbb{Z}}2^j\|\nabla\Delta_{j}u_{2}\|_{L^2}2^{-j}\|S_{j-1}\delta B\|_{L^\infty}\le C\|u_{2}\|_{\dot{B}_{2,1}^2}\|\delta B\|_{\dot{B}_{2,\infty}^0}$$
and
$$J_{43}\le C\sup_{j\in\mathbb{Z}}\sum_{k\ge j-3}2^{2j}\|\tilde{\Delta}_{k}u_{2}\|_{L^2}\|\Delta_{k}\delta B\|_{L^2}\le C\|u_{2}\|_{\dot{B}_{2,1}^2}\|\delta B\|_{\dot{B}_{2,\infty}^0}.$$
So
$$J_{4}\le C\|u_{2}\|_{\dot{B}_{2,1}^2}\|\delta B\|_{\dot{B}_{2,\infty}^0}.$$
Therefore,
$$
\frac{d}{dt}\|\delta B\|_{\dot{B}_{2,\infty}^0}\le C\|(u_{1},u_{2})\|_{\dot{B}_{2,1}^2}\|\delta B\|_{\dot{B}_{2,\infty}^0}+C\|(B_{1},B_{2})\|_{\dot{B}_{2,1}^1}\|\delta u\|_{\dot{B}_{2,1}^1},
$$
which implies that $\forall\ 0\le t\le T,$ $T$ is the lifespan of the solution,
\begin{equation}\label{3.6}
\|\delta B\|_{L^\infty_{t}\dot{B}_{2,\infty}^0}\le C\|(u_{1},u_{2})\|_{L^1_{t}\dot{B}_{2,1}^2}\|\delta B\|_{L^\infty_{t}\dot{B}_{2,\infty}^0}+
C\|(B_{1},B_{2})\|_{L^\infty_{t}\dot{B}_{2,1}^1}\|\delta u\|_{L^1_{t}\dot{B}_{2,1}^1}.
\end{equation}
Set $0<\bar{T}<T$ such that
$$\int_{0}^{\bar{T}}\|(u_{1},u_{2})\|_{\dot{B}_{2,1}^2}dt<\frac{1}{4C},$$
then  $\forall 0\le t\le \bar{T}$, (\ref{3.5}) and (\ref{3.6}) reduce to
\begin{equation}\label{3.7}
\frac{3}{4}\|\delta u\|_{L^\infty_{t}\dot{B}_{2,\infty}^{-1}}+\nu\|\delta u\|_{\tilde{L}^1_{t}\dot{B}_{2,\infty}^1}\le
\int_{0}^{t}\|(B_{1},B_{2})\|_{\dot{B}_{2,1}^1}\|\delta B\|_{\dot{B}_{2,\infty}^0}d\tau
\end{equation}
and
\begin{equation}\label{3.8}
\frac{3}{4}\|\delta B\|_{L^\infty_{t}\dot{B}_{2,\infty}^{0}}\le C\|(B_{1},B_{2})\|_{L^\infty_{t}\dot{B}_{2,1}^1}\|\delta u\|_{L^1_{t}\dot{B}_{2,1}^1}.
\end{equation}
Plugging (\ref{3.8}) into (\ref{3.7}) yields that
\begin{align*}
\frac{3}{4}\|\delta u\|_{L^\infty_{t}\dot{B}_{2,\infty}^{-1}}+\nu\|\delta u\|_{\tilde{L}^1_{t}\dot{B}_{2,\infty}^1}\le&
C\int_{0}^{t}\|(B_{1},B_{2})(\tau)\|_{\dot{B}_{2,1}^1}\|(B_{1},B_{2})\|_{L^\infty_{\tau}\dot{B}_{2,1}^1}\|\delta u\|_{L^1_{\tau}\dot{B}_{2,1}^1}d\tau\\
\le&C_{T}\int_{0}^{t}\|\delta u\|_{L^1_{\tau}\dot{B}_{2,1}^1}d\tau.
\end{align*}
Thanks to the Log-type inequality (\ref{3.1}), denote
$$X(t)=\|\delta u\|_{L^\infty_{t}\dot{B}_{2,\infty}^{-1}}+\|\delta u\|_{\tilde{L}^1_{t}\dot{B}_{2,\infty}^1},$$ we have
\begin{align*}
X(t)\le
C_{T,\nu}\int_{0}^{t} X(\tau)\log\left(e+\frac{V(\tau)}{X(\tau)}\right)d\tau,
\end{align*}
where $V(t)=\|\delta u\|_{\tilde{L}^1_{t}\dot{B}_{2,\infty}^0}+\|\delta u\|_{\tilde{L}^1_{t}\dot{B}_{2,\infty}^2}$, is bounded in $[0,T]$. Applying the Osgood's Lemma (see,\cite{3} p.125) and combining with (\ref{3.6}) can yields $\delta u=\delta B=0$ in $[0,\bar{T}].$ By a standard continuous argument, we can show that $\delta u=\delta B=0$ in $[0,T]\times \mathbb{R}^2.$ This completes the proof of Theorem \ref{1.1}.

\vskip .4in
\section*{Acknowledgements}
The author would like to thank   Prof. J-Y. Chemin  for his
comments.

\vskip .4in


\begin{thebibliography}{99}
\bibitem{3} H. Bahouri, J-Y. Chemin and R. Danchin, \textit{Fourier Analysis and Nonliear Partial Differential Equations.} Springer, 2011.
\bibitem{1} J-Y. Chemin, D. McCormick, J. Robinson and  J. Rodrigo, Local existence for the non-resistive MHD equations in Besov space,  arXiv:1503.01651v1 [math.AP] 5 Mar 2015.
\bibitem{5} R. Danchin, Local theory in critical spaces for the compressible viscous and heat-conductive gases,  {\it Comm. Partial Differential Equations \bf 26(7-8)} (2001), 1183-1233.
\bibitem{Fef2} C. Fefferman,   D. McCormick, J. Robinson and  J. Rodrigo, Higher order commutator estimates and local existence for the non-resistive
MHD equations and related models, {\it J. Funct. Anal. \bf 267(4)} (2014), 1035-1056.

\bibitem{Fan1} J. Fan and T. Ozawa, Regularity criteria for the magnetohydrodynamic
equations with partial viscous terms and the Leray-$\alpha$-MHD model, {\it  Kinet. Relat.
Models \bf 2(2)}, (2009), 293-305.
\bibitem{2} L. Grafakos, \textit{Modern Fourier Analysis.} 2nd Edition., Grad. Text in Math., \textbf{250}, Springer-Verlag, 2008.

\bibitem{Kato} T. Kato and G. Ponce,  Commutator estimates and the Euler and Navier-
Stokes equations, {\it   Comm. Pure Appl. Math. \bf 41(7)}, (1988), 891-907.


\bibitem{4} E. Stein, \textit{Singular Integrals and Differentiability Properties of Functions}, Princeton University Press, Princeton, 1970.
\bibitem{Zhou1} Y. Zhou and J, Fan, A regularity criterion for the 2D MHD system with
zero magnetic diffusivity, {\it  J. Math. Anal. Appl. \bf 378(1)}, (2011), 169-172.





























\end{thebibliography}
\end{document}